\documentclass[12pt]{article}
\usepackage[margin=0.75in]{geometry}
\usepackage{microtype}
\usepackage{amsmath,amssymb,amsthm}
\usepackage{booktabs,graphicx,array}
\usepackage[numbers,sort&compress]{natbib}
\usepackage{xcolor}
\usepackage[colorlinks=true,linkcolor=blue!50!black,citecolor=blue!50!black,urlcolor=blue!50!black]{hyperref}
\usepackage{xurl}
\hypersetup{pdftitle={When Is Local Time Stepping Worthwhile? A Cost Analysis for Nested Adaptive Mesh Refinement},pdfauthor={James M. Hyman}}

\newtheorem{proposition}{Proposition}
\newtheorem{lemma}{Lemma}
\newtheorem{corollary}{Corollary}
\theoremstyle{remark}
\newtheorem{remark}{Remark}

\newcommand{\WG}{W_{\mathrm{glob}}}
\newcommand{\WL}{W_{\mathrm{sub}}}
\newcommand{\Sid}{S_{\mathrm{ideal}}}
\newcommand{\doi}[1]{doi: \href{https://doi.org/#1}{\nolinkurl{#1}}}

\title{When Is Local Time Stepping Worthwhile?\\[4pt]
\large A Cost Analysis for Nested Adaptive Mesh Refinement}
\author{James M. Hyman\\[2pt]
\small Department of Mathematics, Tulane University, New Orleans, LA 70118, USA\\
\small\texttt{mhyman@tulane.edu}}
\date{October 10, 2026}

\begin{document}
\maketitle

\begin{abstract}
In deep hierarchies of adaptive mesh refinement, the update work that subcycling avoids is set mainly by the dimension of what is refined, not of the space. Subcycling (local time stepping) does less work than global stepping, which advances every level with the finest step, exactly when its extra coordination cost stays below an overhead budget $\beta=1-1/\Sid$ as a fraction of the work of global stepping. Here $\Sid$ is the ideal speedup that the cell counts give for equal update costs. When each level follows a feature of box-counting dimension $\delta$ and the counts grow geometrically, the budget of a code that advances every cell tends to $(r-1)/(r^{\delta+1}-1)$ for refinement ratio $r$. A point leaves a budget near one, while a surface in three dimensions leaves $1/7$ at $r=2$ and $1/21$ at $r=4$, values implied by the planar estimates of Dursi and Zingale. Published comparisons in discontinuous Galerkin and octree codes with local time stepping imply overheads of 0.1 to 5 percent of the work, below $1/7$ and comparable with $1/21$. In paired runs of one AMReX code on one core, a cost model fixed in advance and fitted on points and filled regions predicted the speedups of curves and surfaces with a median error of 7 percent (22 percent on the saving); post hoc, the update counts alone did as well except for points. The overhead as defined here was negative in all 17 configurations, but computed from speedups like the published values it ranged from $-0.11$ to $0.19$, and no configuration came near break-even. When an update costs the same under both schedules, one profiled run can show that subcycling pays.
\end{abstract}

\noindent\textbf{Keywords:} adaptive mesh refinement, subcycling, local time stepping, multirate methods, cost model, box-counting dimension\\
\textbf{MSC 2020:} 65M50, 65M08, 65Y20

\section{Introduction}\label{sec:intro}

Practice is divided. Applications built on the \texttt{Amr} class of AMReX, a framework for block-structured adaptive mesh refinement (AMR)~\cite{zhang2019amrex}, subcycle by default and can switch subcycling off with a runtime parameter~\cite{amrex2026runtime}, and the Castro code offers the same default along with a manual schedule~\cite{castro2026timestepping}. Nyx, also built on AMReX, chooses the most efficient subcycling pattern across levels at set intervals during a run~\cite{almgren2013nyx,amrex2026source}. A survey of block-structured frameworks found that all of them except FLASH support subcycling~\cite{dubey2014survey}. Athena++ instead advances all levels with a single time step~\cite{stone2020athena}. For hierarchies with equal numbers of cells on every level, its authors estimate that subcycling would reduce the work by a factor of only about half the number of levels. They also note that interpolation in time adds error and that the savings may be offset by reduced efficiency on parallel machines. Unlike its subcycled counterpart PeleLM, the low Mach number code PeleLMeX also advances all levels together, a choice that let its developers extend a closed-chamber algorithm, in which the thermodynamic pressure of a closed domain evolves in time, to AMR~\cite{esclapez2023pelelmex}. These defaults are set once and paid for in every run that uses them, usually without a measured value for the cost of coordinating the levels.

When does the work saved on the coarse levels pay for the cost of coordinating them? In the nested-grid method of Berger and Oliger~\cite{berger1984adaptive}, a level refined by the ratio $r$ in space also takes $r$ time steps for each step of the level above it. Each level then advances with a step set by its own stability limit. Berger and Colella~\cite{berger1989local} made the coupling between levels conservative by correcting the coarse fluxes at coarse--fine boundaries with the accumulated fine fluxes. Refinement in time is called subcycling in the AMR literature, local time stepping for partial differential equations more generally~\cite{osher1983numerical,dumbser2014arbitrary}, and multirate time stepping for ordinary differential equations~\cite{savcenco2007multirate}. The alternative, which we call global stepping, is also called global time stepping or non-subcycling, and Li and Hyman called it a locked time step~\cite{li2003adaptive}. It advances every level with the time step of the finest level.

Li and Hyman met the question while building a fifth-order WENO scheme into AMR. They locked the time step so that coarse data never had to be interpolated in time. Citing an earlier report of theirs, they found the locked step only slightly slower than local time stepping~\cite{li2003adaptive}. Dursi and Zingale~\cite{dursi2005efficiency} gave an early quantitative answer by computing the ratio of the work with and without time refinement for patch and tree meshes refined around a point, a planar interface and a filled circular region in two dimensions, with no overhead charged to subcycling. Their estimates show large savings only when a few points in a large domain need refinement, and modest savings otherwise. For a planar interface in three dimensions on a patch mesh with $r=2$, time refinement does about $6/7$ of the work. The same ratio of update counts measures the potential gain in studies of local time stepping~\cite{dumbser2014arbitrary,domingues2008adaptive,seny2014efficient}. Domingues et al.\ wrote the saving as a sum over levels weighted by the fraction of cells on each level~\cite{domingues2008adaptive}, and Fernando and Sundar compared the same multilevel model with measured speedups on octree grids~\cite{fernando2022scalable}. The cell counts themselves follow a box-counting law, which Kritsuk, Norman and Padoan used to relate the cells on each AMR level to the fractal dimension of shocks in supersonic turbulence~\cite{kritsuk2006adaptive}. Under that law the number of cells of width $\varepsilon$ that meet a feature grows like $\varepsilon^{-\delta}$ as $\varepsilon$ shrinks, where $\delta$ is the box-counting dimension of the feature. The same power law governs the blocks of a quadtree around an object with a fractal boundary~\cite{faloutsos1996analysis}.

This note combines these strands, and it adds four things that the earlier analyses do not contain. (i) Section~\ref{sec:model} charges subcycling an explicit overhead $H$ and writes the break-even condition, for two levels and for any prescribed schedule, as an overhead budget $\beta$. (ii) Section~\ref{sec:dimension} joins the box-counting law to the multilevel identity to give closed forms for $\beta$ for features of any dimension $\delta$. They cover codes that advance every cell of a level and codes that advance only uncovered cells, and a corollary bounds $\beta$ when the counts are only near geometric. The closed forms extend the estimates of Dursi and Zingale for points and planar interfaces to curves in three dimensions, curved surfaces and fractal fronts. (iii) Section~\ref{sec:published} places the overheads implied by published measurements against these budgets. (iv) Section~\ref{sec:amrex} tests the accounting in paired runs of one AMReX code under an analysis plan fixed before any timing. It compares the measured speedups with the update counts and with a fitted cost model, and it examines a test that needs only one subcycled run; Appendix~\ref{app:protocol} records the protocol. Section~\ref{sec:limits} states what the work model leaves out: time on parallel machines, elliptic synchronization and accuracy. Throughout, the contribution is an accounting framework rather than a new integration method.

The analysis also sharpens a natural intuition, that subcycling pays in one dimension but not in three. The fraction of fine cells does grow with dimension when the refined volume is held fixed. In a deep hierarchy what matters instead is mainly the dimension of the refined feature, and a point in three dimensions offers at least the savings of a point in one.

\begin{center}
\fbox{\parbox{0.94\linewidth}{\textbf{Decision rule.} Let $\Sid$ be the ratio of the cell updates of global stepping to those of subcycling, and let $\beta=1-1/\Sid$. When an update costs the same on every level and under both schedules, subcycling does less work exactly when its extra cost of coordinating the levels, as a fraction of the work of global stepping, stays below $\beta$ (Proposition~\ref{prop:break} and Section~\ref{sec:multi}). For a code that advances every cell of a hierarchy following a feature of box-counting dimension $\delta$, with cell counts that grow geometrically and time refined like space, $\beta$ tends in a deep hierarchy to $(r-1)/(r^{\delta+1}-1)$ (Proposition~\ref{prop:geom}):\par\smallskip
\centering
\begin{tabular}{lcccc}
\toprule
 & point, $\delta=0$ & curve, $\delta=1$ & surface, $\delta=2$ & filled region, $\delta=3$\\
\midrule
$r=2$ & $\to1$ & $1/3$ & $1/7$ & $1/15$\\
$r=4$ & $\to1$ & $1/5$ & $1/21$ & $1/85$\\
\bottomrule
\end{tabular}\par\smallskip
\raggedright Overheads of a few percent, as published comparisons in other local time stepping codes suggest (Section~\ref{sec:published}), consume most of the budget of a surface at $r=4$.}}
\end{center}

\section{Work model and overhead budget}\label{sec:model}

\subsection{Two levels}\label{sec:two}

Consider a two-level mesh with refinement ratio $r=4$ in which nine of every ten cells are fine. Global stepping advances every cell four times during one coarse step, while subcycling advances the coarse cells only once. Subcycling therefore avoids three updates of each coarse cell, or $0.3$ of a sweep over all cells, out of the four sweeps that global stepping performs. The saving is $7.5\%$ of the work, and an extra cost of subcycling any larger than that erases it.

To state this in general, let $N_c>0$ and $N_f>0$ be the numbers of cells advanced on the coarse and fine levels during each of their steps. Patch-based codes usually advance the coarse cells that lie under fine patches as well and later replace their values by averages of the fine data. These cells then belong in $N_c$, while codes that advance only uncovered cells leave them out. During one coarse interval $\Delta T$, subcycling advances the fine level in $r$ steps of length $\Delta T/r$, while global stepping takes $r$ such steps on both levels. Let $C>0$ be the cost of one cell update in the kernel that advances a level. Ghost-cell filling, interpolation, flux bookkeeping and synchronization are not part of $C$. Their net extra cost under subcycling over one coarse interval is the overhead $H$, in which costs common to both schedules cancel and which may be negative. The work per coarse interval is then
\begin{equation}\label{eq:work}
\WG = rC(N_c+N_f), \qquad \WL = C(N_c+rN_f)+H .
\end{equation}

\begin{proposition}[Break-even condition]\label{prop:break}
Subcycling does less work than global stepping, $\WL<\WG$, if and only if
\begin{equation}\label{eq:break}
H<(r-1)CN_c .
\end{equation}
Equality gives equal work.
\end{proposition}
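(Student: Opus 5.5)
The plan is a direct computation from the work expressions in \eqref{eq:work}. I will form the difference $\WG-\WL$ and show that it equals $(r-1)CN_c-H$. The claimed equivalence, together with the statement about equality, then follows because subtracting a common quantity preserves strict inequalities and equalities.

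First I expand the two expressions:
\begin{equation*}
\WG-\WL = rCN_c + rCN_f - CN_c - rCN_f - H .
\end{equation*}
The fine-level terms $rCN_f$ appear in both schedules and cancel. This is the whole point of the accounting: the fine level takes $r$ steps of length $\Delta T/r$ under either schedule, so only the coarse level contributes to the saving. What remains is $(r-1)CN_c-H$.

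Next I read off the three cases. $\WL<\WG$ holds if and only if $\WG-\WL>0$, which is the same as $H<(r-1)CN_c$. $\WL=\WG$ holds exactly when $H=(r-1)CN_c$. No division occurs, so I never need the signs of $C$, $N_c$ or $N_f$, and $H$ may be negative without any change to the argument. The hypotheses $C>0$ and $N_c>0$ matter only for interpreting the result. They make the budget $(r-1)CN_c$ strictly positive when $r>1$, so a zero or negative overhead always favours subcycling.

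There is no real obstacle here. The only step worth care is confirming that the definition of $H$ as the \emph{net} extra cost makes \eqref{eq:work} exact. Coordination costs common to both schedules must already have cancelled, so no hidden term survives in the difference. Optionally I will also divide the inequality by $\WG>0$. This rewrites the condition as $H/\WG<(r-1)N_c/(r(N_c+N_f))=1-1/\Sid$, which connects the result to the budget $\beta$ used in the decision rule.
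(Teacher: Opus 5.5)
Your proposal is correct and follows the paper's own argument: subtracting the two expressions in \eqref{eq:work} gives $\WG-\WL=(r-1)CN_c-H$, and both the strict inequality and the equality case follow from that. Your optional division by $\WG$ also correctly recovers the budget form $H/\WG<\beta$, which the paper states right after the proposition.
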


\begin{proof}
Subtracting the two expressions in~\eqref{eq:work} gives $\WG-\WL=(r-1)CN_c-H$.
\end{proof}

With the fine fraction $f=N_f/(N_c+N_f)$ and the normalized overhead $h=H/[C(N_c+N_f)]$, which measures $H$ in units of one sweep over all cells, the ratio of the two costs is
\begin{equation}\label{eq:speed}
S=\frac{\WG}{\WL}=\frac{r}{1+(r-1)f+h},\qquad \Sid=\frac{r}{1+(r-1)f}.
\end{equation}
The ideal speedup $\Sid$, the value at $H=0$, falls from $r$ toward one as $f$ grows, so a mesh dominated by fine cells leaves little to save. A second form of the criterion is more useful in practice, because it compares the overhead directly with the work that subcycling avoids. As a fraction of the work of global stepping, the avoided update work is
\begin{equation}\label{eq:budget}
\beta=1-\frac{1}{\Sid}=\Bigl(1-\frac{1}{r}\Bigr)(1-f),
\end{equation}
and Proposition~\ref{prop:break} is equivalent to $H/\WG<\beta$. The quantity $\beta$ is an overhead budget: subcycling wins exactly when its extra cost, measured as a fraction of the work of global stepping, stays below $\beta$. When $H<0$ the realized saving $\beta-H/\WG$ exceeds $\beta$, so $\beta$ is the saving before overhead rather than a ceiling. The same quantity, read as a speed-up rate, appears in the analysis of multiresolution schemes with local time stepping~\cite{domingues2008adaptive}. The reading as a budget against a measured overhead is the point of the present note.

In the example above, $r=4$ and $f=0.9$ give $\Sid=1.081$ and $\beta=0.075$. An overhead of half a sweep per coarse step ($h=0.5$) amounts to $H/\WG=h/r=0.125$. That exceeds the budget and gives $S=0.952$, so global stepping is cheaper even though it performs more cell updates. The same overhead with $f=0.1$ leaves $S=2.22$, because there the budget is $\beta=0.675$.

\begin{remark}[Normalization]\label{rem:norm}
The verdict does not depend on which run the overhead is measured against. With $x=H/\WG$ one has $H/\WL=x/(1-\beta+x)$, and $x<\beta$ holds exactly when $H/\WL<\beta$. A profiler that reports the overhead as a fraction of the subcycled run therefore gives the same answer at break-even.
\end{remark}

\begin{remark}[Level-dependent costs]\label{rem:costs}
If coarse and fine updates cost $C_c$ and $C_f$, the work becomes $\WG=r(C_cN_c+C_fN_f)$ and $\WL=C_cN_c+rC_fN_f+H$, and the criterion becomes $H<(r-1)C_cN_c$. Only the cost of the coarse updates that subcycling avoids enters the threshold.
\end{remark}

\begin{remark}[Schedule-dependent costs and counts]\label{rem:schedule}
The model assumes that an update costs the same under both schedules. When stiff or implicit terms are solved iteratively, the cost of a step can depend on its length. Each of the $r$ short coarse steps of global stepping may then cost less than one long step. With coarse costs $C_c^{\mathrm{G}}$ and $C_c^{\mathrm{L}}$ under the two schedules, the criterion becomes $H<(rC_c^{\mathrm{G}}-C_c^{\mathrm{L}})N_c$. The counts can also differ because under global stepping no fine step needs coarse data inside the fine region. A patch code could then skip covered coarse cells under global stepping while still advancing them under subcycling. The counts of each schedule then belong in its own expression, and the budget shrinks.
\end{remark}

\subsection{Multilevel schedules}\label{sec:multi}

Now let the hierarchy have levels $\ell=0,\dots,L$, with $N_\ell$ cells advanced on level $\ell$ at a cost $C_\ell$ per update. During one step of level~0, a subcycled schedule advances level $\ell$ a number $q_\ell$ of times, with $q_0=1$. Global stepping advances every level $Q=\max_\ell q_\ell$ times, which equals $q_L=r^L$ for a uniform ratio $r$ with time refined like space. With $H$ again the net overhead of subcycling,
\begin{equation}\label{eq:multiwork}
\WG=Q\sum_{\ell=0}^{L}C_\ell N_\ell,\qquad \WL=\sum_{\ell=0}^{L}q_\ell C_\ell N_\ell+H,
\end{equation}
so that
\begin{equation}\label{eq:multibreak}
\WL<\WG \quad\Longleftrightarrow\quad H<\sum_{\ell=0}^{L}(Q-q_\ell)\,C_\ell N_\ell .
\end{equation}
Every level with $q_\ell<Q$ contributes to the right side of~\eqref{eq:multibreak}, weighted by the number of updates it avoids. The budget form carries over unchanged, with $\Sid=Q\sum_\ell C_\ell N_\ell/\sum_\ell q_\ell C_\ell N_\ell$ and $\beta=1-1/\Sid$. With uniform costs and $q_\ell=r^\ell$, this $\Sid$ is the reciprocal of the work ratio computed by Dursi and Zingale~\cite{dursi2005efficiency} and the speedup model of Fernando and Sundar~\cite{fernando2022scalable}. The identity adds level-dependent costs, an arbitrary schedule and the explicit overhead to that model.

The identity does not assume that the ratios in time equal those in space. Stability can force a smaller ratio in time: the Athena++ authors note that the fastest waves can sit on the root level~\cite{stone2020athena}, and Nyx finds that not subcycling is most efficient when a global constraint limits every level's step~\cite{almgren2013nyx}. Suppose each level may take only $r_t<r$ steps per step of the level above, so that $q_\ell=r_t^\ell$ and $Q=r_t^L$. The closed forms of Section~\ref{sec:dimension} then hold with $r$ replaced by $r_t$ wherever it counts steps, while the growth factor per level $\rho=r^\delta$ of a feature of dimension $\delta$ (Section~\ref{sec:growth}) still counts cells, so that with every cell advanced the deep-hierarchy budget becomes $(r_t-1)/(r_t\rho-1)$. For a surface refined with $r=4$ in space but $r_t=2$ in time, the budget falls from $1/21$ (Table~\ref{tab:limits}) to $1/31$, and with $r_t=1$ there is nothing left to save.

\section{What sets the cell counts}\label{sec:dimension}

\subsection{A fixed refined volume}\label{sec:fixed}

Suppose first that a two-level mesh refines a fixed fraction $p\in(0,1)$ of the volume of a base grid with $N$ cells in $d$ dimensions, and ignore padding and boundary effects. Counting only uncovered coarse cells, $N_c=(1-p)N$ and $N_f=pr^dN$, so the fine fraction is
\begin{equation}\label{eq:fixed}
f_d(p,r)=\frac{pr^d}{1-p+pr^d}.
\end{equation}
If the covered coarse cells are also advanced, $N_c=N$ and $f_d=pr^d/(1+pr^d)$, which changes the numbers slightly but not the trend. For fixed $p$ and $r$ the fine fraction rises with $d$ and the ideal speedup falls. With $p=0.05$ and $r=4$, Table~\ref{tab:fixed} shows $\Sid$ dropping from $2.63$ in one dimension to $1.21$ in three. This is the dimension effect behind the intuition that subcycling suits one-dimensional problems. Yet a fixed refined volume is the wrong picture for most AMR calculations, because the refined region follows features of the solution and shrinks as the hierarchy deepens.

\begin{table}[ht]
\centering
\caption{Two levels refining a fixed fraction $p=0.05$ of the volume with $r=4$ and $H=0$, counting only uncovered coarse cells. The fine fraction $f_d$ comes from~\eqref{eq:fixed}, the ideal speedup from~\eqref{eq:speed} and the budget from~\eqref{eq:budget}. At a fixed refined volume the budget falls with the dimension of the space; Section~\ref{sec:feature} shows why this picture misleads. These are evaluations of the model, not timings.}
\label{tab:fixed}
\begin{tabular}{cccc}
\toprule
Dimension $d$ & Fine fraction $f_d$ & Ideal speedup $\Sid$ & Budget $\beta$\\
\midrule
1 & 0.1739 & 2.6286 & 0.6196\\
2 & 0.4571 & 1.6867 & 0.4071\\
3 & 0.7711 & 1.2073 & 0.1717\\
\bottomrule
\end{tabular}
\end{table}

\subsection{Growth law}\label{sec:growth}

Let level $\ell+1$ consist of the children of the level-$\ell$ cells whose centers lie within $b$ cell widths, $b\,\Delta x_\ell$, of a feature $F$, with $b$ fixed and $\Delta x_\ell=\Delta x_0\,r^{-\ell}$, and write $M_\ell$ for the number of cells that level $\ell$ holds. Suppose that the number $N_\varepsilon(F)$ of mesh cubes of side $\varepsilon$ that meet $F$ satisfies $c_1\varepsilon^{-\delta}\le N_\varepsilon(F)\le c_2\varepsilon^{-\delta}$ for $\varepsilon\le\varepsilon_0$, with constants $c_1,c_2>0$. Compact smooth curves and surfaces satisfy this bound, as do self-similar fractals that satisfy the open set condition, and $\delta$ is then the box-counting dimension of $F$~\cite{falconer2014fractal}.

\begin{lemma}[Growth law]\label{lem:growth}
If $b\ge\sqrt d/2$, every cell that meets $F$ is present on every level, and there are constants $k_1,k_2>0$, depending on $F$, $b$, $d$, $r$ and $\Delta x_0$, such that $k_1 r^{\delta\ell}\le M_\ell\le k_2 r^{\delta\ell}$ whenever $\Delta x_{\ell-1}\le\varepsilon_0$. In particular $M_\ell^{1/\ell}\to r^\delta$.
\end{lemma}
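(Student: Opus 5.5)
The plan is to prove three things in turn: nesting (every cell meeting $F$ survives to every level), a lower bound on $M_\ell$ through cells that meet $F$, and an upper bound on $M_\ell$ by comparing the refined cells with boxes of the parent width. Throughout, level $0$ is the full base grid. Level $\ell+1$ is the set of children of the level-$\ell$ cells whose centers lie within $b\,\Delta x_\ell$ of $F$.

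\emph{Nesting.} A cube of side $\Delta x_\ell$ has half-diagonal $\sqrt d\,\Delta x_\ell/2$. If a level-$\ell$ cell $Q$ meets $F$, every point of $Q$, its center included, lies within $\sqrt d\,\Delta x_\ell/2\le b\,\Delta x_\ell$ of $F$. So $Q$ is refined whenever it is present on level $\ell$. A level-$(\ell+1)$ cell that meets $F$ has a parent that meets $F$. By induction from level $0$, its whole ancestor chain is present and refined, so the cell itself is present. This gives the first claim.

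\emph{Lower bound.} Nesting gives $M_\ell\ge N_{\Delta x_\ell}(F)$, counting only the cubes of the aligned level-$\ell$ lattice. If $N_\varepsilon$ is taken over arbitrary placements or grid cubes, the two counts differ by at most a factor $3^d$, since each grid cube meets at most $3^d$ cubes of any other lattice of the same side. Whenever $\Delta x_\ell\le\Delta x_{\ell-1}\le\varepsilon_0$, this yields $M_\ell\ge c_1 3^{-d}\Delta x_0^{-\delta}r^{\delta\ell}$.

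\emph{Upper bound.} For $\ell\ge1$, $M_\ell=r^d\,\#\{\text{refined level-}(\ell-1)\text{ cells}\}$. A refined parent $P$ has its center within $b\,\Delta x_{\ell-1}$ of some point $x\in F$. Let $B$ be the lattice cube of side $\Delta x_{\ell-1}$ containing $x$; it meets $F$. Then the center of $P$ lies within $(b+\sqrt d)\Delta x_{\ell-1}$ of the center of $B$. The number of lattice cells whose centers lie in such a ball is at most $K=(2b+2\sqrt d+1)^d$. The map $P\mapsto B$ is therefore at most $K$-to-one into the cubes meeting $F$. This gives $M_\ell\le r^dK\,N_{\Delta x_{\ell-1}}(F)\le r^dK3^dc_2\,\Delta x_0^{-\delta}r^{\delta(\ell-1)}$ whenever $\Delta x_{\ell-1}\le\varepsilon_0$. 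This is $k_2r^{\delta\ell}$ with $k_2$ depending only on $F,b,d,r,\Delta x_0$.

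\emph{Limit.} Taking $\ell$-th roots gives $k_1^{1/\ell}r^\delta\le M_\ell^{1/\ell}\le k_2^{1/\ell}r^\delta$, and both sides tend to $r^\delta$.

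The main obstacle is the upper-bound bookkeeping. The counting hypothesis concerns cubes meeting $F$, but refinement is triggered by proximity at the parent scale. The comparison must therefore be made at scale $\Delta x_{\ell-1}$, which is why the hypothesis is stated for $\Delta x_{\ell-1}\le\varepsilon_0$ and why $k_2$ picks up the factor $r^d$. The lattice-versus-arbitrary-cube ambiguity in $N_\varepsilon$ only costs the constant $3^d$.
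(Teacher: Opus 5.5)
Your proof is correct and follows the paper's route. The paper likewise gets nesting from the half-diagonal bound, a lower bound from the cells that meet $F$, and an upper bound from $M_\ell=r^d$ times the tagged count on level $\ell-1$, with each tagged cell lying within a fixed number of widths of a cell that meets $F$. The differences are cosmetic: you take the lower bound at level $\ell$ rather than from the tagged cells of level $\ell-1$, and you make the constant $K$ and the lattice-alignment factor $3^d$ explicit where the paper leaves them as unspecified constants.
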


\begin{proof}
A cell of width $\Delta x$ that meets $F$ has its center within the half-diagonal $\sqrt d\,\Delta x/2\le b\,\Delta x$ of $F$, so it is tagged. By induction every level-$\ell$ cell that meets $F$ exists, because its parent meets $F$ and was tagged, so the tagged cells on level $\ell-1$ number at least $N_{\Delta x_{\ell-1}}(F)$. They lie within $(b+\sqrt d/2)\Delta x_{\ell-1}$ of $F$, so each lies within a fixed number of cell widths of a cell that meets $F$. They therefore number at most a constant times $N_{\Delta x_{\ell-1}}(F)$, the constant depending only on $b$ and $d$. The bounds follow because $M_\ell$ is $r^d$ times the tagged count on level $\ell-1$ and $\Delta x_{\ell-1}^{-\delta}=\Delta x_0^{-\delta}r^{\delta(\ell-1)}$.
\end{proof}

The lemma is the AMR form of the box-counting law that Kritsuk et al.\ applied to AMR levels~\cite{kritsuk2006adaptive} and that Faloutsos and Gaede derived for quadtrees under a level-independence assumption~\cite{faloutsos1996analysis}. Over several levels the counts therefore grow by the average factor
\begin{equation}\label{eq:growth}
\bar\rho=\Bigl(\frac{M_{\ell+k}}{M_\ell}\Bigr)^{1/k}\approx r^{\delta},
\end{equation}
where the approximation improves as $k$ grows; it is a scaling statement, not an exact count. In three dimensions a point, a smooth curve, a smooth surface and a filled region have $\delta=0$, $1$, $2$ and $3$. For a smooth feature the individual ratios approach $r^\delta$ once $b\,\Delta x_\ell$ is small compared with the radius of curvature, or for a filled region with its radius. For points and for features aligned with the grid they keep fluctuating, because the position of the feature within a cell changes from level to level. Padding fixed in cell units, such as buffer cells, blocking factors and the clustering of tagged cells into patches, changes $b$ and the constants but not the exponent. A front that is fractal over the resolved range of scales has a non-integer $\delta$ between 2 and 3; for the turbulent--non-turbulent interface of a supersonic boundary layer, box counting gives $\delta\approx2.31$~\cite{zhuang2018fractal}. Below the inner cutoff of the fractal range the front is smooth, so~\eqref{eq:growth} with non-integer $\delta$ holds only on the levels that resolve that range.

\subsection{Budgets for feature-following hierarchies}\label{sec:feature}

Codes differ in which cells of a feature-following hierarchy they advance. Patch-based codes that advance every cell of a level advance all of them. Codes that advance only the cells not covered by the next level advance a fraction $a=1-\rho/r^d=1-r^{\delta-d}$ of each coarser level, because the next level covers $\rho/r^d$ of its cells. Through $a$ the codimension $d-\delta$ of the feature enters the count of advanced cells and therefore the budget.

\begin{proposition}[Feature-following hierarchy]\label{prop:geom}
Suppose level $\ell$ holds $M_\ell=M_L\,\rho^{\ell-L}$ cells for $\ell=0,\dots,L$, with $\rho=r^\delta\ge1$, and the code advances all cells of the finest level and a fraction $a\in[0,1]$ of the cells of each coarser level. With uniform costs, $q_\ell=r^\ell$ and $H=0$, the ideal speedup is
\begin{equation}\label{eq:SL}
S_L=\frac{1+a\sum_{j=1}^{L}\rho^{-j}}{1+a\sum_{j=1}^{L}(r\rho)^{-j}},
\end{equation}
which for $a>0$ increases with $L$ and satisfies $1\le S_L\le r^L$, and which equals one when $a=0$. If $\delta>0$, then
\begin{equation}\label{eq:Sinf}
S_\infty=\lim_{L\to\infty}S_L=\frac{1+a/(\rho-1)}{1+a/(r\rho-1)},\qquad
\beta_\infty=1-\frac{1}{S_\infty}=\frac{a\,(r-1)\,\rho}{(\rho-1+a)(r\rho-1)},
\end{equation}
which for $a=1$ reduce to $S_\infty=1+(r-1)/[r(r^\delta-1)]$ and $\beta_\infty=(r-1)/(r^{\delta+1}-1)$. If $\delta=0$, then
\begin{equation}\label{eq:S0}
S_L=\frac{1+aL}{1+a\,(1-r^{-L})/(r-1)},
\end{equation}
which grows without bound, with increments that approach $a(r-1)/(r-1+a)$ per added level, or $1-1/r$ when $a=1$.
\end{proposition}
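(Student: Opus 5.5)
Everything follows from the multilevel identity of Section~\ref{sec:multi} with $C_\ell=C$, $q_\ell=r^\ell$, $Q=r^L$ and $H=0$. The advanced counts are $N_L=M_L$ and $N_\ell=aM_L\rho^{\ell-L}$ for $\ell<L$. This gives
\[
S_L=\frac{r^L\sum_\ell N_\ell}{\sum_\ell r^\ell N_\ell}.
\]
Factor $M_L$ from both sums and substitute $j=L-\ell$. The numerator becomes $r^L M_L\bigl(1+a\sum_{j=1}^L\rho^{-j}\bigr)$. The denominator is $M_L\bigl(r^L+a\sum_{j=1}^L r^{L-j}\rho^{-j}\bigr)=r^L M_L\bigl(1+a\sum_{j=1}^L(r\rho)^{-j}\bigr)$. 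Cancelling $r^LM_L$ yields \eqref{eq:SL}. When $a=0$ both sums drop out and $S_L=1$.

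For the qualitative claims, write $A_L=\sum_{j\le L}\rho^{-j}$ and $B_L=\sum_{j\le L}(r\rho)^{-j}$, so that $B_L\le A_L$ termwise because $r\ge1$.
\begin{itemize}
\item[] \emph{Lower bound.} $S_L\ge1$ follows from $A_L\ge B_L$.
\item[] \emph{Upper bound.} Each term satisfies $\rho^{-j}=r^j(r\rho)^{-j}\le r^L(r\rho)^{-j}$ for $j\le L$. Hence $1+aA_L\le r^L(1+aB_L)$, which gives $S_L\le r^L$.
\item[] \emph{Monotonicity in $L$.} Passing from $L$ to $L+1$ adds $x=a\rho^{-(L+1)}$ to the numerator and $y=a(r\rho)^{-(L+1)}$ to the denominator. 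The ratio increases iff $x(1+aB_L)\ge y(1+aA_L)$. Multiply by $\rho^{L+1}/a$; the condition becomes $1+aB_L\ge r^{-(L+1)}(1+aA_L)$. This holds because $r^{-(L+1)}\rho^{-j}=r^{-(L+1-j)}(r\rho)^{-j}\le(r\rho)^{-j}$ and $r^{-(L+1)}\le1$.
\end{itemize}
This monotonicity comparison is the step I expect to need the most care, since it must hold for every $a\in(0,1]$ and $\rho\ge1$. Strictness when $r>1$ follows from the strict inequality $r^{-(L+1)}<1$.

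For $\delta>0$ we have $\rho>1$, so both geometric series converge: $A_\infty=1/(\rho-1)$ and $B_\infty=1/(r\rho-1)$. This gives $S_\infty$ in \eqref{eq:Sinf}. Then
\[
\beta_\infty=\frac{S_\infty-1}{S_\infty}=\frac{a(A_\infty-B_\infty)}{1+aA_\infty}.
\]
Here $A_\infty-B_\infty=(r-1)\rho/[(\rho-1)(r\rho-1)]$ and $1+aA_\infty=(\rho-1+a)/(\rho-1)$. The factor $\rho-1$ cancels and leaves the stated $\beta_\infty$. With $a=1$, the factor $\rho-1+a$ becomes $\rho$ and cancels, giving $\beta_\infty=(r-1)/(r\rho-1)=(r-1)/(r^{\delta+1}-1)$. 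Likewise $S_\infty-1=(A_\infty-B_\infty)/(1+B_\infty)$, and $1+B_\infty=r\rho/(r\rho-1)$, so $S_\infty-1=(r-1)/[r(\rho-1)]$.

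For $\delta=0$ we have $\rho=1$, so $A_L=L$ and $B_L=\sum_{j\le L}r^{-j}=(1-r^{-L})/(r-1)$, which is \eqref{eq:S0}. Assume $r>1$. The denominator then increases to $D_\infty=1+a/(r-1)=(r-1+a)/(r-1)$, while the numerator grows linearly, so $S_L\to\infty$. The increment is
\[
S_{L+1}-S_L=\frac{a}{D_{L+1}}+(1+aL)\Bigl(\frac1{D_{L+1}}-\frac1{D_L}\Bigr).
\]
The second term is $O(Lr^{-L})$ and tends to zero, because $D_{L+1}-D_L=ar^{-(L+1)}$. The increment therefore tends to $a/D_\infty=a(r-1)/(r-1+a)$, which equals $(r-1)/r=1-1/r$ when $a=1$.
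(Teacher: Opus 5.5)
Your proof is correct and follows the paper's route: the same normalization by $r^LM_L$ with $j=L-\ell$, and your termwise inequalities for the bounds and the monotonicity are the paper's weighted-mean argument (values $r^j$ with weights $a(r\rho)^{-j}$) written out, since your monotonicity condition $1+aB_L\ge r^{-(L+1)}(1+aA_L)$ is exactly $S_L\le r^{L+1}$. The limits for $\delta>0$ and the $\delta=0$ formula are obtained as in the paper, and your increment computation supplies detail that the paper only asserts.
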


\begin{proof}
$S_L$ is a weighted mean of the values $1,r,\dots,r^L$, and the bounds, the monotonicity and the limits follow from that. Divide $\WG$ and $\WL$ by $QM_L=r^LM_L$ and set $j=L-\ell$ to obtain~\eqref{eq:SL}. The numerator equals $1+\sum_j a(r\rho)^{-j}r^j$, so $S_L$ is the mean of $1,r,\dots,r^L$ with positive weights $1,a(r\rho)^{-1},\dots,a(r\rho)^{-L}$, which gives the bounds; with $a=0$ only the first weight remains. Adding a level adds a term of value $r^{L+1}>S_L$ to this mean, so $S_L$ increases with $L$. For $\rho>1$ the two sums converge to $1/(\rho-1)$ and $1/(r\rho-1)$, which gives~\eqref{eq:Sinf} after simplification. For $\rho=1$ the numerator equals $1+aL$ and the denominator is a finite geometric sum, which gives~\eqref{eq:S0}, and the increments tend to $a(r-1)/(r-1+a)$.
\end{proof}

Real counts are geometric only on average, so the limit needs a statement under the hypotheses of Lemma~\ref{lem:growth}.

\begin{corollary}[Near-geometric counts]\label{cor:near}
Let $S_L^{\mathrm{geo}}$ be the value~\eqref{eq:SL} for exactly geometric counts, with the same $a$, $r$ and $\rho$. (i) If the held counts satisfy $k_1\rho^\ell\le M_\ell\le k_2\rho^\ell$ on every level $\ell=0,\dots,L$, then $S_L-1$ lies within a factor $\kappa=k_2/k_1$ of $S_L^{\mathrm{geo}}-1$, that is, $(S_L^{\mathrm{geo}}-1)/\kappa\le S_L-1\le\kappa\,(S_L^{\mathrm{geo}}-1)$. (ii) If $\delta>0$ and the level ratios $M_{\ell+1}/M_\ell$ converge to $\rho$ as $\ell$ grows, then $S_L$ converges to $S_\infty$ as the hierarchy deepens.
\end{corollary}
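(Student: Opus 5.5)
Part (i) follows from the weighted-mean representation in the proof of Proposition~\ref{prop:geom}, applied directly to $S_L-1$. With uniform costs, $q_\ell=r^\ell$ and $Q=r^L$, the advanced counts are $w_L=M_L$ and $w_\ell=aM_\ell$ for $\ell<L$. Then
\begin{equation*}
S_L-1=\frac{\sum_{\ell=0}^{L}w_\ell\,(r^L-r^\ell)}{\sum_{\ell=0}^{L}w_\ell\,r^\ell}
=\frac{\sum_{\ell<L}aM_\ell\,(r^L-r^\ell)}{M_Lr^L+\sum_{\ell<L}aM_\ell\,r^\ell}.
\end{equation*}
Every coefficient multiplying an $M_\ell$ here is nonnegative, in both numerator and denominator. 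Let $N^{\mathrm{geo}}$ and $D^{\mathrm{geo}}$ be the same expressions with $M_\ell=\rho^\ell$; their quotient is $S_L^{\mathrm{geo}}-1$, since~\eqref{eq:SL} is invariant under scaling all counts.

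The hypothesis $k_1\rho^\ell\le M_\ell\le k_2\rho^\ell$, applied term by term, gives $k_1N^{\mathrm{geo}}\le N\le k_2N^{\mathrm{geo}}$ and $k_1D^{\mathrm{geo}}\le D\le k_2D^{\mathrm{geo}}$. Dividing the bounds yields $(S_L^{\mathrm{geo}}-1)/\kappa\le S_L-1\le\kappa(S_L^{\mathrm{geo}}-1)$. The case $a=0$ is trivial, since both sides vanish.

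For part (ii) I would divide by $r^LM_L$ and index by $j=L-\ell$, as in the proof of Proposition~\ref{prop:geom}, which gives
\begin{equation*}
S_L=\frac{1+a\sum_{j=1}^{L}m_{L,j}}{1+a\sum_{j=1}^{L}m_{L,j}\,r^{-j}},\qquad m_{L,j}=\frac{M_{L-j}}{M_L}.
\end{equation*}
For fixed $j$, $m_{L,j}$ is the reciprocal of a product of $j$ consecutive level ratios, each tending to $\rho$, so $m_{L,j}\to\rho^{-j}$ as $L\to\infty$. It then suffices to show that $\sum_{j=1}^{L} m_{L,j}\to\sum_{j\ge1}\rho^{-j}=1/(\rho-1)$. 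Since $r^{-j}\le1$, the same argument then gives $\sum_j m_{L,j}r^{-j}\to1/(r\rho-1)$, and the limit~\eqref{eq:Sinf} follows.

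The main obstacle is the interchange of limit and sum, which needs a uniform tail bound; this is where $\delta>0$, i.e.\ $\rho>1$, enters. I would fix $\rho'\in(1,\rho)$ and choose $\ell_0$ so that $M_{\ell+1}/M_\ell\ge\rho'$ for all $\ell\ge\ell_0$. I would then split the sum into two ranges.
\begin{itemize}
\item For $L-j\ge\ell_0$, one gets $m_{L,j}\le\rho'^{-j}$, a summable majorant independent of $L$. Dominated convergence then handles this part of the sum.
\item For the at most $\ell_0$ indices with $L-j<\ell_0$, each term satisfies $m_{L,j}\le\max_{\ell<\ell_0}M_\ell/M_L$. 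Since $M_L\ge M_{\ell_0}\rho'^{\,L-\ell_0}\to\infty$, this finite block of terms tends to zero.
\end{itemize}
Combining the two pieces gives the convergence of both sums, and hence $S_L\to S_\infty$.
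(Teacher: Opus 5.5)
Your proof is correct and follows the paper's argument. In (i) you use that both sums are nonnegative combinations of the $M_\ell$, so the termwise bounds change the ratio by at most $\kappa$. In (ii) you normalize by $QM_L$, use the pointwise limits $\rho^{-j}$, and apply dominated convergence with a geometric majorant.

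The only difference is cosmetic. You split off the at most $\ell_0$ early-level terms and show that they vanish, whereas the paper absorbs them into a single bound of the form $m_{L,j}\le C\theta^{-j}$, valid for all $j\le L$ once $L$ is large.
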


\begin{proof}
(i) The excess speedup is $S_L-1=\sum_\ell(Q-q_\ell)a_\ell M_\ell/\sum_\ell q_\ell a_\ell M_\ell$, where $a_\ell=a$ below the finest level and $a_L=1$. Both sums are positive combinations of the $M_\ell$, so replacing each $M_\ell$ by its bounds changes the ratio by at most the factor $\kappa$. (ii) After both sums are divided by $QM_L$, each term $M_{L-j}/M_L$ tends to $\rho^{-j}$. Since the ratios eventually exceed some $\theta>1$, the terms are bounded by a summable multiple of $\theta^{-j}$, and dominated convergence gives the limit.
\end{proof}

The factor $\kappa$ can be large when a full base grid is counted, as for the point of Section~\ref{sec:counts}, so the band in (i) bounds the order of magnitude rather than giving a sharp estimate.

Table~\ref{tab:limits} lists the limits for $r=2$ and $r=4$ under both conventions, Figure~\ref{fig:dimension} shows $\beta_\infty$ as a function of $\delta$, and Figure~\ref{fig:budget} shows how quickly the limits are reached when $a=1$. For surfaces and filled regions the budget lies within $1\%$ of its limit after four refined levels when $r=2$ and after two when $r=4$. For curves at $r=2$ the remaining gap halves with each level, so the approach is slower.

\begin{remark}[A full base grid]\label{rem:base}
Proposition~\ref{prop:geom} lets level 0 follow the geometric pattern, but a full base grid under a feature-following hierarchy often holds more cells than the pattern implies. Each extra cell advanced on level 0 adds $r^L$ to $\WG$ and only one to $\WL$, so it raises $S_L$. When $\delta>0$ this surplus fades like $\rho^{-L}$ relative to the work on the finest level. With $a=1$ and geometric refined levels, $S_L$ approaches its limit from above for large $L$ exactly when $M_0>M_1/(\rho-1)$, and from below otherwise. The reason is that in $\WG$ the base-grid surplus over an infinite geometric tail, $M_0-M_1/(\rho-1)$, decays like $\rho^{-L}$ relative to the finest level, while the corresponding correction in $\WL$ decays faster, like $(r\rho)^{-L}$. In either case the limits~\eqref{eq:Sinf} are values for deep hierarchies, not bounds.
\end{remark}

\begin{table}[ht]
\centering
\caption{Ideal speedup $S$ and overhead budget $\beta$ for a hierarchy that follows a feature of box-counting dimension $\delta$ (Proposition~\ref{prop:geom}), when every cell of every level is advanced ($a=1$), and the budget $\beta_{\mathrm{u}}$ when only uncovered cells are advanced in three dimensions ($a=1-r^{\delta-3}$). For $\delta>0$ the entries are the deep-hierarchy limits~\eqref{eq:Sinf}; for $\delta=0$ they are finite-depth values from~\eqref{eq:S0}, since the speedup grows without bound. For a filled region $a=0$ in the limit, so $\beta_{\mathrm{u}}=0$; at finite depth the uncovered shell around the region keeps $\beta_{\mathrm{u}}$ small but positive (Table~\ref{tab:counts}). The budget falls by roughly a factor $r$ for each added dimension of the refined feature, and advancing only uncovered cells lowers it further.}
\label{tab:limits}
\begin{tabular}{llcccccc}
\toprule
 & & \multicolumn{3}{c}{$r=2$} & \multicolumn{3}{c}{$r=4$}\\
\cmidrule(lr){3-5}\cmidrule(lr){6-8}
$\delta$ & Refined feature & $S$ & $\beta$ & $\beta_{\mathrm{u}}$ & $S$ & $\beta$ & $\beta_{\mathrm{u}}$\\
\midrule
0 & point, $L=3$ & 2.133 & 0.531 & 0.513 & 3.012 & 0.668 & 0.665\\
0 & point, $L=7$ & 4.016 & 0.751 & 0.738 & 6.000 & 0.833 & 0.832\\
1 & curve & $3/2$ & $1/3$ & $2/7$ & $5/4$ & $1/5$ & $4/21$\\
2 & surface & $7/6$ & $1/7$ & $4/49$ & $21/20$ & $1/21$ & $16/441$\\
3 & filled region & $15/14$ & $1/15$ & $0$ & $85/84$ & $1/85$ & $0$\\
\bottomrule
\end{tabular}
\end{table}

\begin{figure}[tbp]
\centering
\includegraphics[width=\linewidth]{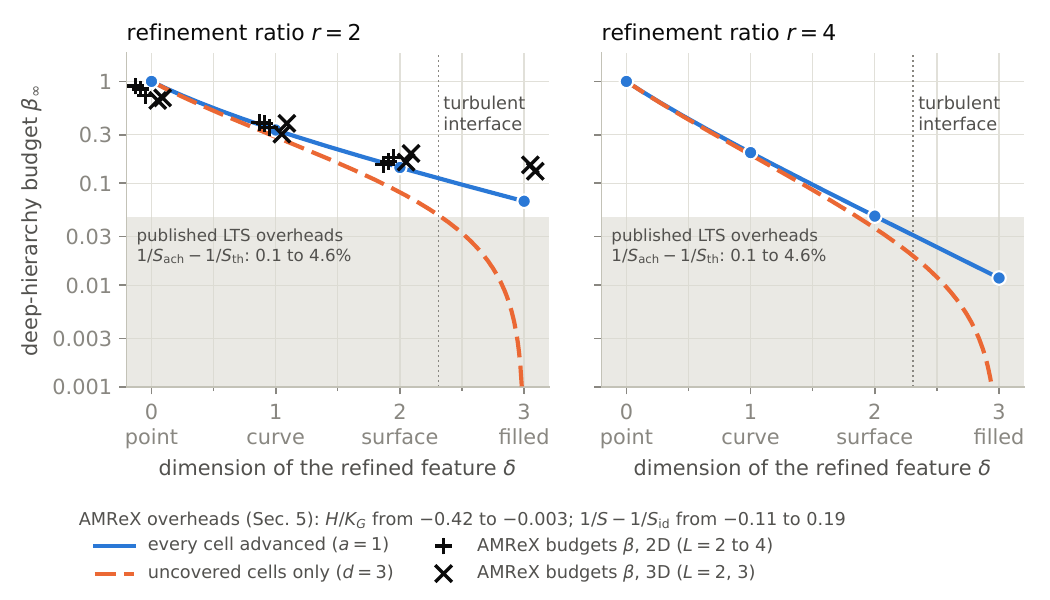}
\caption{Deep-hierarchy overhead budget $\beta_\infty$ against the box-counting dimension $\delta$ of the refined feature, for codes that advance every cell (solid) and codes that advance only uncovered cells in three dimensions (dashed), from~\eqref{eq:Sinf}. The gray band spans the overhead fractions $1/S_{\mathrm{ach}}-1/S_{\mathrm{th}}$ implied by published comparisons of achieved with theoretical local time stepping speedups (Section~\ref{sec:published}), 0.1 to 4.6 percent of the work; the dotted line marks $\delta\approx2.31$ for a turbulent interface~\cite{zhuang2018fractal}. Subcycling pays where the measured overhead lies below the curve: comfortably for points and curves, with some margin for surfaces at $r=2$, and marginally or not at all for surfaces at $r=4$ and for filled regions. Crosses in the left panel are the finite-depth budgets $\beta$ of the 17 AMReX hierarchies of Section~\ref{sec:amrex}, plotted on the axis of $\beta_\infty$ for comparison (upright for two dimensions, diagonal for three, offset sideways by depth). The axis names features in three dimensions, and the two-dimensional disk, a filled region of the plane, sits at $\delta=2$ beside the spheres. The overheads of the 17 hierarchies as Section~\ref{sec:two} defines them, from $-0.42$ to $-0.003$, lie below the axis. Computed from speedups like the published values, they range from $-0.11$ to $0.19$ (Section~\ref{sec:overhead}).}
\label{fig:dimension}
\end{figure}

\begin{figure}[tbp]
\centering
\includegraphics[width=\linewidth]{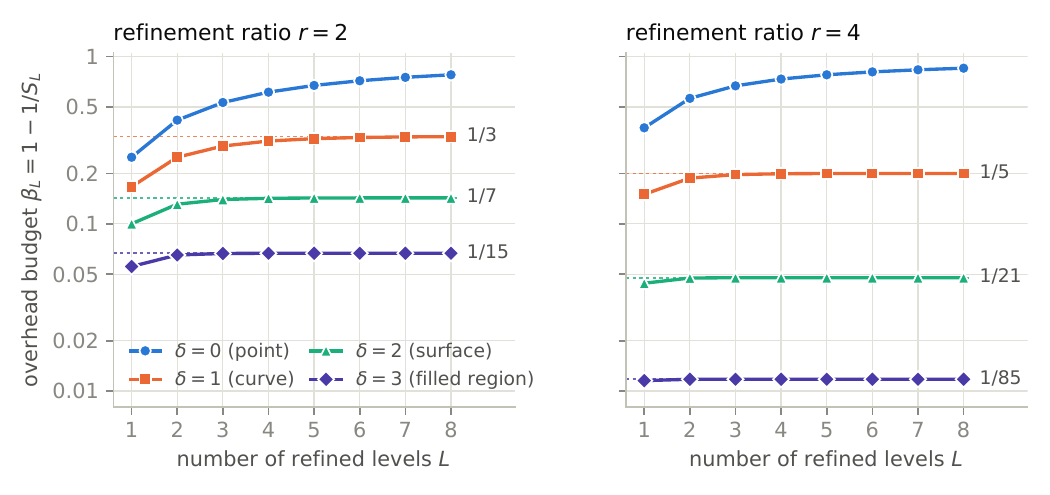}
\caption{Overhead budget $\beta_L=1-1/S_L$ from Proposition~\ref{prop:geom}, with every cell of every level advanced ($a=1$), against the number of refined levels for refined features of dimension $\delta=0,1,2,3$. Subcycling does less work than global stepping only if its overhead, as a fraction of the work of global stepping, lies below the curve. Dashed lines mark the limits $\beta_\infty=(r-1)/(r^{\delta+1}-1)$, and the vertical axis is logarithmic. For a point the budget approaches one as levels are added. For surfaces and filled regions it settles within a few levels, and for curves it approaches its limit more slowly.}
\label{fig:budget}
\end{figure}

These formulas move the dimension that matters from the space to what is refined. Table~\ref{tab:fixed} tempts one to conclude that subcycling suits one-dimensional problems and loses its value in three, but the calculation says otherwise. A point source or a compact object in three dimensions has $\delta=0$, and its savings grow with every level as they do in one dimension: with $r=2$, $a=1$ and seven refined levels, the ideal speedup is $4.02$.

A shock or a flame front in three dimensions is a surface with $\delta=2$, and in a deep hierarchy its budget approaches $1/7$ at $r=2$ however many levels are added. A larger refinement ratio lowers the plateau: with $a=1$ the excess speedup $S_\infty-1=(r-1)/[r(r^\delta-1)]$ decays like $r^{-\delta}$, because the finest level holds a larger share of the cells as $r$ increases.

\begin{table}[ht]
\centering
\caption{Codes that advance only uncovered cells, $r=2$: the per-level gain in ideal speedup for a point, $a(r-1)/(r-1+a)$, and the deep-hierarchy budget for a curve, both with $a=1-r^{\delta-d}$. A larger codimension $d-\delta$ helps.}
\label{tab:codim}
\begin{tabular}{lccc}
\toprule
 & $d=1$ & $d=2$ & $d=3$\\
\midrule
Point, gain per level & $1/3$ & $3/7$ & $7/15$\\
Curve, budget $\beta_{\mathrm{u}}$ & n/a & $2/9$ & $2/7$\\
\bottomrule
\end{tabular}
\end{table}

The dimension of the space enters in two ways: it bounds $\delta\le d$ and decides which features need refinement, since shocks, flames and material interfaces in three dimensions are surfaces. In codes that advance only uncovered cells it also enters through the codimension $d-\delta$ in $a$, and there a larger codimension helps. As Table~\ref{tab:codim} shows, a point gains more per level, and a curve keeps a larger budget, in three dimensions than in two.

\subsection{Agreement with earlier estimates}\label{sec:earlier}

The results of Section~\ref{sec:feature} agree with the earlier estimates: for deep hierarchies the case $a=1$ reproduces the estimates of Dursi and Zingale for a point and for a planar interface on patch meshes. The case $a=1-r^{\delta-d}$ reproduces their deep-hierarchy values for tree meshes, which include the per-level gains of Table~\ref{tab:codim} for a point and the budgets $2/9$ and $4/49$ for a planar interface in two and three dimensions. Their patch-mesh formula already implies the budgets $1/7$ and $1/21$ for a planar front in three dimensions. What is new here is the general dependence on $\delta$, $a$ and $r$, which covers curves in three dimensions, curved surfaces and fractal fronts. The Athena++ estimate assumes equal numbers of cells on every level and is the case $\delta=0$, $a=1$, $r=2$ of~\eqref{eq:S0}. The finding of Li and Hyman that the locked step cost only slightly more can be neither confirmed nor contradicted by the model, because the configurations of those tests are not recorded~\cite{li2003adaptive}. Hierarchies built around compact objects, such as the nested boxes used in numerical relativity~\cite{schnetter2004evolutions}, are close to the case $\delta=0$, where subcycling has the most to offer. The Athena++ authors likewise note that their single global time step becomes less efficient when more than ten levels cover one percent or less of the volume~\cite{stone2020athena}.

\subsection{Explicit counts}\label{sec:counts}

To test the growth law~\eqref{eq:growth} we built hierarchies on the unit cube with a $16^3$ base grid, $r=2$ and five refined levels. At each level we tagged the cells whose centers lie at distance at most two cell widths from the feature and replaced each tagged cell by its eight children on the next level. The features were a point at $c=(0.5123,\,0.4871,\,0.5037)$, a circle of radius $0.3$ about $c$ in a plane through $c$ tilted by $0.37$ radians from the grid plane $z=0.5037$, and a sphere and a solid ball of radius $R=0.3$ about $c$. The distance is the Euclidean distance to the point, the circle and the sphere, and to the ball it is the distance outside it, zero inside. The small offsets of $c$ from the center of the cube keep the features off the grid nodes. Table~\ref{tab:counts} lists the counts, which grow by factors close to 2 and 4 per level for the circle and the sphere. The counts for the point stay nearly constant, with fluctuations of up to about a quarter from level to level as cells enter and leave the band. For the ball the ratio is close to $8[(R+2\Delta x_\ell)/(R+4\Delta x_\ell)]^3$, which approaches 8 slowly and reaches $7.42$ on the last level, because the band is not thin compared with $R$ on the coarse levels.

\begin{table}[ht]
\centering
\small
\setlength{\tabcolsep}{4pt}
\caption{Cells per level, $M_\ell$, in synthetic three-dimensional hierarchies with a $16^3$ base grid ($M_0=4096$), $r=2$ and a tagging band of two cell widths. The mean growth factor $(M_5/M_1)^{1/4}$ is close to $r^\delta$ except for the ball, whose band is not thin on the coarse levels. The speedup $S_5$ evaluates~\eqref{eq:multiwork} on the counts with $H=0$ and every cell advanced; the model column is~\eqref{eq:SL} at the same depth; $\beta_{\mathrm{u}}$ is the budget on the same counts when only uncovered cells are advanced, to be compared with the limits $2/7$, $4/49$ and $0$ of Table~\ref{tab:limits}.}
\label{tab:counts}
\begin{tabular}{lcrrrrrcccc}
\toprule
Feature & $\delta$ & $M_1$ & $M_2$ & $M_3$ & $M_4$ & $M_5$ & $(M_5/M_1)^{1/4}$ & $S_5$, counts & $S_5$, model & $\beta_{\mathrm{u}}$\\
\midrule
point  & 0 & 288 & 272 & 304 & 240 & 304 & 1.01 & 8.094 & 3.048 & 0.879\\
circle & 1 & 2\,984 & 6\,072 & 12\,144 & 24\,416 & 48\,344 & 2.01 & 1.516 & 1.477 & 0.298\\
sphere & 2 & 9\,808 & 37\,616 & 148\,728 & 592\,808 & 2\,372\,680 & 3.94 & 1.167 & 1.166 & 0.082\\
ball   & 3 & 10\,552 & 52\,376 & 319\,272 & 2\,209\,632 & 16\,396\,936 & 6.28 & 1.080 & 1.071 & 0.008\\
\bottomrule
\end{tabular}
\end{table}

Evaluating the identity~\eqref{eq:multiwork} on these counts gives $S_5=1.516$, $1.167$ and $1.080$ for the circle, the sphere and the ball, slightly above the limits $3/2$, $7/6$ and $15/14$, an excess that the full base grid explains (Remark~\ref{rem:base}). For the sphere the speedups computed from the counts rise to $1.183$ at $L=2$ and then fall to $1.167$ at $L=5$, and for the ball they fall from $1.163$ at $L=1$ to $1.080$. For the circle they rise to $1.539$ at $L=3$ and then fall toward $3/2$, reaching $1.500$ at $L=10$. The uncovered-cell budgets on the same counts, $0.298$ and $0.082$ for the circle and the sphere, are close to the limits $2/7$ and $4/49$, and the ball's $0.008$ is the small positive value left by its uncovered shell. For the point the base grid dominates, since its $4096$ cells outnumber the $240$ to $304$ cells on each refined level, and the counts give $S_5=8.09$ against $3.05$ for the geometric model. A band of three cell widths gives mean growth factors $1.00$, $2.00$ and $3.88$ for the point, circle and sphere. The circle refined with $r=4$ grows by $4.07$ and $3.98$ per level, so the exponent does not depend on these choices.

\begin{remark}[A circle aligned with the grid]\label{rem:aligned}
The same circle in the grid plane $z=0.5037$ shows why Corollary~\ref{cor:near}(i) rather than a limit is the right statement for aligned features. Its level ratios swing between $1.79$ and $2.25$ out to eleven levels, because the plane's position within a cell changes from level to level, and $S_L$ oscillates between $1.491$ and $1.525$ for $5\le L\le 11$ without settling. Its refined levels stay within a factor $1.13$ of geometric growth against $1.02$ for the tilted circle, which converges to $3/2$.
\end{remark}

For a particular hierarchy the identity evaluated on its actual counts is the right tool, while Proposition~\ref{prop:geom} gives the limit as the hierarchy deepens. The blocked hierarchies that AMReX builds at depths two to four grow more slowly than these synthetic counts (Section~\ref{sec:counts-amrex}).

\section{Published overheads and what the work model leaves out}\label{sec:overheads}

A budget decides the comparison of work only when it is set against the overhead $H$, as a fraction of the work of global stepping. This section sets published estimates of $H$ against the budgets of Section~\ref{sec:dimension} and then states what the work model leaves out.

\subsection{Published overheads against the budgets}\label{sec:published}

Comparisons of achieved with theoretical speedups give a first estimate of $H$: if $S_{\mathrm{th}}=\WG/(\WL-H)$ is the speedup predicted by the update counts and $S_{\mathrm{ach}}=\WG/\WL$ the measured one, then $H/\WG=1/S_{\mathrm{ach}}-1/S_{\mathrm{th}}$. Breuer and Heinecke report $2.14$ against a theoretical $2.28$ and $6.0$ against $6.3$ for local time stepping in a discontinuous Galerkin seismology code, which gives $H/\WG\approx0.029$ and $0.008$~\cite{breuer2022next}. Seny et al.\ report $10.457$ against a theoretical $10.932$ for multirate discontinuous Galerkin, or $0.004$~\cite{seny2014efficient}. Fernando and Sundar's four octree runs give values from $0.001$ to $0.046$~\cite{fernando2022scalable}. These codes are not block-structured AMR and the measured time includes load imbalance, but they suggest overheads of a few percent of the work. Figure~\ref{fig:dimension} shows them as a band. Against Table~\ref{tab:limits}, such overheads leave a front in three dimensions profitable at $r=2$, with budgets $1/7\approx0.14$ when every cell is advanced and $4/49\approx0.08$ when only uncovered cells are, and close to break-even at $r=4$, with $1/21\approx0.048$ and, for the illustrative case of uncovered cells only, $16/441\approx0.036$. Measurements in AMR codes also report large gains in some settings: adaptive time stepping for the implicit radiation solver of RAMSES reduced the computing time by factors of 5 to 50 depending on the problem~\cite{commercon2014fast}, and the GAMER-2 authors expect notable gains when fine levels cover small subvolumes~\cite{schive2018gamer}. For the incompressible solver IAMR, built on AMReX, subcycling was slower for a fixed number of steps and faster for a fixed final time~\cite{liu2024investigate}. Section~\ref{sec:amrex} measures $H$ directly in one AMReX code, where it is negative as Section~\ref{sec:two} defines it and mostly positive when computed from speedups in this way.

\subsection{What the work model leaves out}\label{sec:limits}

\paragraph{The overhead.}
The model places every difference between the two schedules other than the update count in $H$, and several of its parts have a definite sign. Subcycling must interpolate coarse data in time to fill the ghost cells of each fine substep~\cite{mccorquodale2011high}, and it must keep coarse data at more than one time level~\cite{dursi2005efficiency}. For $\delta>0$ the extra storage is about the coarse share of the cells, $1/\rho$ of the state in a deep hierarchy. Against these, global stepping fills the coarse ghost cells $r$ times per coarse interval rather than once, a cost that subcycling avoids. Conservation requires flux corrections at coarse--fine boundaries under either schedule~\cite{berger1989local}, so only the difference in that bookkeeping enters $H$. The largest contributions arise when the equations contain elliptic constraints or implicit terms. Subcycled projection methods for incompressible flow~\cite{almgren1998conservative,martin2008cell} and for low Mach number reacting flow~\cite{day2000numerical} advance the levels separately and then correct them with synchronization solves across levels, whereas a global step can solve one composite problem. For such methods $H$ may be a large fraction of the work, and the small budgets of Table~\ref{tab:limits} leave little room for it.

\paragraph{Time on parallel machines.}
The model counts work, not time, and both schedules take $Q$ sequential steps on the finest level for each coarse step. When the cost of a step is set by communication latency rather than by arithmetic, as in strong scaling to many processors, the wall-clock time of either schedule is bounded below by that of the $Q$ finest-level steps. Subcycling then saves little time even when it saves work, and the Athena++ authors note that its savings may be offset by reduced parallel efficiency~\cite{stone2020athena}. How the comparison comes out in the latency-bound limit depends on how a code organizes its levels. Subcycling advances each level in a phase of its own, $\sum_\ell q_\ell$ phases per coarse step. If global stepping advances all levels in one fused phase per step, it has $Q$ phases, and subcycling can be slower by a factor of up to $\sum_\ell q_\ell/Q\to r/(r-1)$, which is 2 at $r=2$. If global stepping instead advances each level in its own phase, it has $(L+1)Q$ phases, and subcycling has fewer: $63$ against $192$ at $r=2$ with five refined levels. On accelerators a level advance also pays a launch cost, and a small level does not fill the device, so advancing level $\ell$ once costs about $\alpha+C\max(N_\ell,N_{\mathrm{fill}})$. The small coarse levels then cost more per cell, which shrinks the avoided work and the budget. SeisSol's experimental automatic choice of its local time stepping schedule minimizes a cost per unit simulated time of exactly this form~\cite{seissol2026local}. Subcycling also complicates load balance~\cite{dubey2014survey,stone2020athena,schive2018gamer}: finer levels do more of the work, and different levels are active at different moments. In this regime the comparison needs a timing model with a latency charge for each synchronization phase, not a work count alone.

\paragraph{Accuracy and stability.}
The comparison is made at equal final time rather than at equal error, and global stepping advances level $\ell$ at a Courant number smaller than subcycling does by the factor $Q/q_\ell$. For the first-order upwind method applied to $u_t+cu_x=0$, the modified equation carries the numerical diffusion coefficient $c\,\Delta x\,(1-\nu)/2$, where $\nu$ is the Courant number \cite[Sec.~8.6]{leveque2002finite}. For this method the smaller steps of global stepping make the coarse solution more diffusive, not more accurate. For higher-order methods whose temporal error matters, smaller steps can reduce the error instead. Subcycling adds its own error through the interpolation in time of coarse ghost data~\cite{stone2020athena}. With a multistage Runge--Kutta method every stage needs ghost data, and Li and Hyman noted that interpolation between coarse solutions stored only at $t_n$ and $t_{n+1}$ is at most second order. They locked the time step so that the order at coarse--fine boundaries depended only on the spatial interpolation~\cite{li2003adaptive}. McCorquodale and Colella later reached fourth order by interpolating with the Runge--Kutta stage values~\cite{mccorquodale2011high}. A complete comparison would set the two schedules at matched error, which the work model does not attempt.

\section{A paired test in AMReX}\label{sec:amrex}

\subsection{Design}\label{sec:design}

A direct test of the accounting runs the same hierarchy with subcycling on and off in one code. For each level it records the cells advanced, the steps taken and the time spent advancing, filling ghost cells, correcting fluxes and synchronizing. The per-level timings of the update kernels give the costs $C_\ell$, and the difference between the two runs in the time spent beyond those costs gives $H$. The model passes if costs and an overhead fitted on some runs predict the measured speedup on others. A single subcycled run gives a weaker test that needs no second run. Let $K$ be its time in the update kernels and $O$ its time outside them. If every update costs the same on every level and under both schedules, global stepping spends $\Sid K$ in its kernels alone, and subcycling wins whenever $O<(\Sid-1)K$, where $\Sid$ comes from the level counts the code already knows. The condition is sufficient but not necessary, because it charges subcycling with all of its time outside the kernels, including the part that global stepping pays as well. The kernels must be large enough that the work per cell dominates their time rather than the fixed cost of each launch or loop, or the measured $C_\ell$ will not describe production runs.

We ran the paired test in the tutorial code \texttt{Advection\_AmrCore} of AMReX~\cite{zhang2019amrex}, which advances a passive scalar with an explicit finite-volume method. The code offers both schedules and advances every cell of every level, so $a=1$ in the notation of Section~\ref{sec:feature}. A patch applied the tagging rule of Section~\ref{sec:growth} with a band of two cell widths around a point, a circle, a sphere or a filled region placed as in Section~\ref{sec:counts}. It replaced the rotating velocity by a constant one so that $Q=r^L$ holds exactly. It also added timers for the update kernel, ghost filling, flux synchronization, averaging down, the stability check and the time-step computation. The hierarchy was built once at initialization with $r=2$, a blocking factor of 8, one buffer cell and base grids of $128^2$ and $32^3$ cells. Every run used one process and one thread on a shared machine with two cores.

An analysis plan fixed the configurations, timers, models, selection rule and criterion before any timed run. Appendix~\ref{app:protocol} records its hash, its one amendment and the checks on the data. The primary set has 17 configurations. In two dimensions a point, a circle and a filled disk are refined to depths $L=2$, 3 and 4, and in three dimensions a point, a tilted circle, a sphere and a ball to $L=2$ and 3. Each configuration ran five times under each schedule in a fixed pseudorandom order, and we report medians.

The measured speedup is the ratio of window times, $S=T_G/T_L$, with subscripts $G$ and $L$ for global stepping and subcycling (this $L$, for local time stepping, is not the depth). Let $U_G$ and $U_L$ be the numbers of cell updates and $C=K_G/U_G$ the cost per update of global stepping. The overhead of Section~\ref{sec:two} is then $H=T_L-T_G+C(U_G-U_L)$, reported as $x_C=H/K_G$ in units of kernel time. Since $T_L<T_G$ precisely when $x_C<\beta$, comparing $x_C$ with $\beta$ is the same test as $S>1$, an identity rather than a prediction. Because $T_G$ also contains time outside the kernels, $x_C$ does not enter the speedup formula~\eqref{eq:speed}; the time saving obeys $1-1/S=\beta-y$ instead, where $y=1/S-1/\Sid$. The plan defined the overhead as $O_L-O_G$ alone. The three measures differ in what they count as overhead.
\begin{center}
\small
\begin{tabular}{@{}>{\raggedright\arraybackslash}p{0.17\linewidth}>{\raggedright\arraybackslash}p{0.47\linewidth}>{\raggedright\arraybackslash}p{0.28\linewidth}@{}}
\toprule
Measure & What it counts & Where it is used\\
\midrule
$x_C=H/K_G$ & the difference in time outside the kernels, plus the kernel time of subcycling beyond its updates priced at $C$; in units of $K_G$ & the overhead of Section~\ref{sec:two}, compared with $\beta$\\
$(O_L-O_G)/K_G$ & the difference in time outside the kernels alone, in units of $K_G$ & the plan's measure, reported in Section~\ref{sec:overhead}\\
$y=1/S-1/\Sid$ & all time beyond the update counts, $H+\beta O_G$, which includes a share $\beta$ of the time global stepping spends outside the kernels; as a fraction of $T_G$ & the measure of the published values (Section~\ref{sec:published})\\
\bottomrule
\end{tabular}
\end{center}

The split between fitting and testing is by geometry. The ten configurations tagged around points and filled regions fit the model, and the seven tagged around curves and surfaces, the cases the budgets of Table~\ref{tab:limits} concern, are held out. The held-out ideal speedups of 1.19 to 1.66 and growth factors of 1.12 to 2.86 lie inside the training ranges of 1.15 to 10.3 and 0.44 to 3.86. On each measure six of the seven fall between the training clusters of points and filled regions, where no training configuration lies, and five of the seven do so on both. Twelve candidate models pair one of four kernel models, from one cost per update to one per update on each level, with one of three overhead models, from a fixed fraction of the kernel time to a cost per level advance plus costs per ghost cell and per cell. Coefficients were fitted by non-negative least squares on relative residuals, and the pair with the lowest median leave-one-configuration-out error on the training set was selected. It combines one cost per update in each dimension with an overhead for each schedule and dimension built from a cost per level advance, per ghost cell and per cell. Its predictions for the held-out configurations were frozen, and their hash recorded, before any held-out speedup was computed. The criterion was a median relative error $e=|\hat S-S|/S$ of at most 10 percent on the held-out set, with every error above 20 percent explained.

\subsection{Predicted and measured speedups}\label{sec:prediction}

The selected model passed. The median held-out error was 0.071 with a 95 percent bootstrap interval of 0.021 to 0.082, and the largest error was 0.080 for the sphere at $L=2$ (Figure~\ref{fig:validation} and Table~\ref{tab:amrex}). The interval reflects variation between repetitions only, for these seven configurations on one machine, and the estimate lies near its upper end. Four cells were rerun under the plan, and with their original repetitions instead the median held-out error is 0.075 and the largest 0.110. No held-out error exceeded 20 percent, so the explanation standard of the plan was never invoked.

The remaining comparisons were made after the data, and they show that the test does not separate the fitted model from the update counts. The ideal speedup $\Sid$ of the counts alone predicts the held-out speedups with a median error of 0.073, and all twelve candidate pairs pass with held-out medians from 0.032 to 0.071, the selected pair having the largest. Measured on the saving $S-1$ rather than on $S$, the counts and the selected model have median held-out errors of 21 and 22 percent, so a tolerance of 10 percent on $S$ is lenient when $S$ lies between 1.2 and 1.7. The fit also gets the speedup without getting its parts. The frozen model underpredicted every held-out window time by 5 to 45 percent under both schedules. In all four three-dimensional held-out configurations it predicted more time outside the kernels under subcycling than under global stepping, the opposite of what was measured, and these errors largely cancel in the ratio. The model predicts the ratio of the two times about as well as the update counts, which need no fitting, and the test does not show that the fitted costs or the fitted overhead transfer.

The counts fail for the points, where they overstate the speedup by 20 to 97 percent. The measured $S$ is a weighted mean of the kernel ratio $K_G/K_L$ and the non-kernel ratio $O_G/O_L$ with weights $K_L$ and $O_L$, and the kernel ratio lies between 0.80 and 1.00 times $\Sid$. For the points the non-kernel ratio is far below $\Sid$, so $S$ falls to between 0.51 and 0.83 of $\Sid$ and the fitted overhead is needed: left out one at a time the points were predicted with errors from 0.005 to 0.39. For the filled regions the non-kernel ratio is larger, so $S$ exceeds $\Sid$. The break-even condition~\eqref{eq:break} concerns the difference of the two times and is not affected. As Proposition~\ref{prop:geom} predicts, the measured speedups grow with depth for the points (from 2.17 to 5.23 in two dimensions and from 2.09 to 2.67 in three), while for the other features they stay between 1.18 and 1.59. At these depths the data do not resolve the ordering among features with $\delta>0$.

\begin{table}[ht]
\centering
\footnotesize
\setlength{\tabcolsep}{3.2pt}
\caption{Paired AMReX runs with $r=2$, every cell advanced and a fixed hierarchy; medians of five repetitions. $S=T_G/T_L$ is the measured speedup and $e=|\hat S-S|/S$ the error of the selected model: the frozen prediction for the held-out curves and surfaces, and the leave-one-configuration-out prediction, marked $^{*}$, for the training points and filled regions. $e_{\mathrm{id}}$ is the error when $\Sid$ itself is the prediction, with no fitted coefficient. $\Sid$ and $\beta$ come from the level counts with uniform cost, $x_C=H/K_G$ is the overhead as Section~\ref{sec:two} defines it, with one cost per update taken from global stepping, $y=1/S-1/\Sid$ is the overhead measured from speedups as for the published codes, and the one-run test is the sufficient condition of Section~\ref{sec:design}. Rows are generated by \texttt{validation/s6\_tables.py}. The growth factor is $(M_L/M_1)^{1/(L-1)}$, to be compared with $r^\delta=2^\delta$. Subcycling was faster in every configuration.}
\label{tab:amrex}
\begin{tabular}{llccccccccccc}
\toprule
$d$ & Feature & $\delta$ & $L$ & $S$ & $e$ & $e_{\mathrm{id}}$ & $\Sid$ & $\beta$ & $x_C$ & $y$ & One-run test & Growth\\
\midrule
2 & point & 0 & 2 & 2.172 & 0.390$^{*}$ & 0.708 & 3.710 & 0.730 & $-0.060$ & $0.191$ & yes & 0.44\\
2 & point & 0 & 3 & 3.678 & 0.163$^{*}$ & 0.741 & 6.405 & 0.844 & $-0.275$ & $0.116$ & yes & 0.82\\
2 & point & 0 & 4 & 5.232 & 0.081$^{*}$ & 0.968 & 10.296 & 0.903 & $-0.422$ & $0.094$ & yes & 0.76\\
2 & circle & 1 & 2 & 1.367 & 0.071 & 0.128 & 1.541 & 0.351 & $-0.229$ & $0.083$ & no & 1.61\\
2 & circle & 1 & 3 & 1.594 & 0.014 & 0.024 & 1.631 & 0.387 & $-0.420$ & $0.014$ & no & 1.58\\
2 & circle & 1 & 4 & 1.504 & 0.071 & 0.103 & 1.659 & 0.397 & $-0.392$ & $0.062$ & no & 1.68\\
2 & disk & 2 & 2 & 1.291 & 0.132$^{*}$ & 0.056 & 1.219 & 0.179 & $-0.192$ & $-0.046$ & no & 3.55\\
2 & disk & 2 & 3 & 1.387 & 0.159$^{*}$ & 0.137 & 1.197 & 0.165 & $-0.362$ & $-0.114$ & no & 3.59\\
2 & disk & 2 & 4 & 1.316 & 0.115$^{*}$ & 0.103 & 1.180 & 0.153 & $-0.298$ & $-0.087$ & no & 3.69\\
\addlinespace
3 & point & 0 & 2 & 2.092 & 0.258$^{*}$ & 0.366 & 2.857 & 0.650 & $-0.003$ & $0.128$ & yes & 1.00\\
3 & point & 0 & 3 & 2.669 & 0.005$^{*}$ & 0.203 & 3.211 & 0.689 & $-0.159$ & $0.063$ & yes & 0.88\\
3 & circle & 1 & 2 & 1.280 & 0.001 & 0.122 & 1.437 & 0.304 & $-0.016$ & $0.085$ & no & 1.12\\
3 & circle & 1 & 3 & 1.521 & 0.075 & 0.073 & 1.632 & 0.387 & $-0.132$ & $0.045$ & yes & 1.32\\
3 & sphere & 2 & 2 & 1.191 & 0.080 & 0.001 & 1.192 & 0.161 & $-0.068$ & $0.001$ & no & 2.69\\
3 & sphere & 2 & 3 & 1.222 & 0.058 & 0.018 & 1.244 & 0.196 & $-0.065$ & $0.015$ & no & 2.86\\
3 & ball & 3 & 2 & 1.213 & 0.030$^{*}$ & 0.029 & 1.179 & 0.151 & $-0.100$ & $-0.024$ & no & 2.81\\
3 & ball & 3 & 3 & 1.182 & 0.114$^{*}$ & 0.026 & 1.151 & 0.131 & $-0.078$ & $-0.023$ & no & 3.86\\
\bottomrule
\end{tabular}
\end{table}

\begin{figure}[tbp]
\centering
\includegraphics[width=\linewidth]{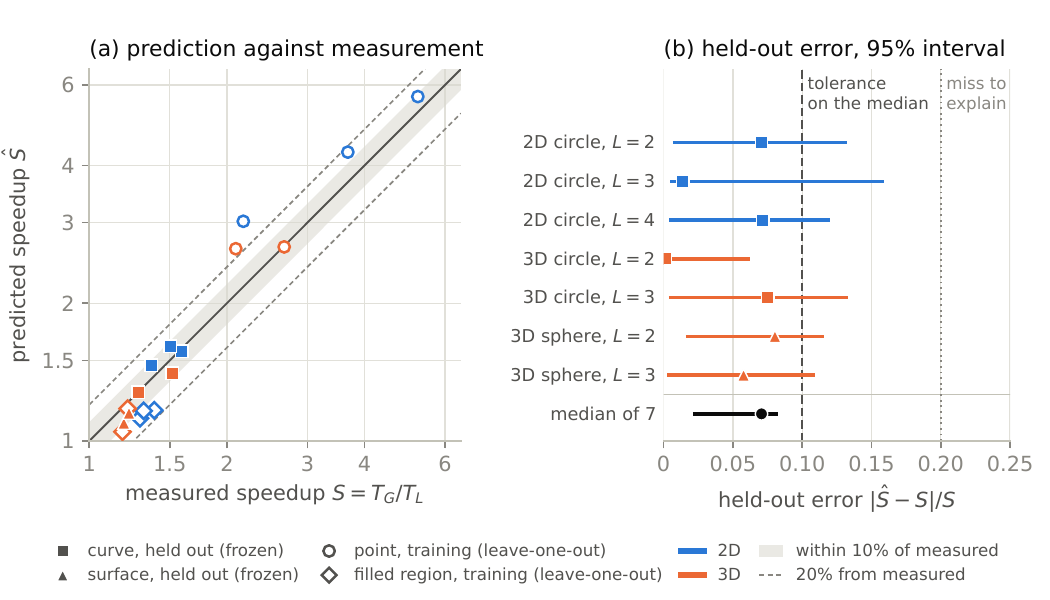}
\caption{Validation in AMReX. (a) Predicted against measured speedup on logarithmic axes. Filled markers are the frozen predictions for the held-out curves (squares) and surfaces (triangles), and open markers are the leave-one-configuration-out predictions for the training points (circles) and filled regions (diamonds). Color marks the dimension. The shaded band lies within 10 percent of the measured value, and the dashed lines lie 20 percent from it. (b) Held-out errors with 95 percent bootstrap intervals and their median, against the tolerance of 10 percent on the median and the level of 20 percent above which a miss must be explained.}
\label{fig:validation}
\end{figure}

\subsection{The overhead}\label{sec:overhead}

A negative overhead is allowed by the model, and the measures in this subsection were again defined after the data. As Section~\ref{sec:two} defines it, the overhead of subcycling was negative in every configuration, with $x_C$ from $-0.42$ to $-0.003$. It has two parts of opposite sign. The first is the time outside the kernels, which is smaller under subcycling: $(O_L-O_G)/K_G$ runs from $-0.45$ to $-0.06$. This is the measure the plan fixed, so the sign does not depend on the change of definition. Pooled over the 17 configurations, the time-step computation, which global stepping repeats at every step of the finest level and subcycling performs once per coarse step, accounts for 68 percent of $O_G-O_L$. Ghost filling adds 31 percent and averaging down 18 percent because global stepping does both on every level at every fine step, and flux synchronization and untimed time add 11 percent. The stability check that only the subcycled advance performs in this code offsets 29 percent. The second part is the kernel itself, which costs 0 to 25 percent more per update under subcycling. The subcycled kernel fuses the flux and update loops and copies the fluxes for the flux registers. That copying is flux bookkeeping that Section~\ref{sec:two} counts in $H$ rather than in $C$, a measured case of the schedule-dependent costs of Remark~\ref{rem:schedule}.

The sign rests on how this tutorial organizes its time step. The time-step computation takes 7 to 13 percent of $T_G$. Without its difference between the schedules $x_C$ would be positive in 6 of the 17 configurations, and without the stability check as well in 3 (Table~\ref{tab:counterfactual}). Even then the counterfactual overheads $x_C'$ and $x_C''$ of Table~\ref{tab:counterfactual} stay below $\beta$ in all 17, by at least 0.08 in units of $K_G$, so on this post hoc accounting a global step that computed its time step once per coarse step would still have been slower in every configuration. The margins are wide partly because global stepping in this code also does ghost filling and averaging down on every level at each step of the finest level, so the overhead favored subcycling despite the costlier kernel. Reaching break-even would take a larger cost that falls on subcycling, such as the load imbalance between levels on many processors (Section~\ref{sec:limits}), which a run on one core does not have.

The sign also depends on how the overhead is measured. The published values of Section~\ref{sec:published} come from speedups as $1/S_{\mathrm{ach}}-1/S_{\mathrm{th}}$, which counts as overhead all time beyond the update counts, including time outside the kernels that both schedules spend alike. The same formula on the AMReX runs, $1/S-1/\Sid$, gives $-0.11$ to $0.19$. It is positive in 12 of the 17 configurations (every point, curve and sphere) and above the published maximum of 0.046 in 8 of them. A comparison with the published band of Figure~\ref{fig:dimension} must use this measure, and on it this code overlaps and exceeds the published range.

\subsection{The one-run test}\label{sec:onerun}

The single-run test of Section~\ref{sec:design} held in 6 of the 17 configurations, the five points and the three-dimensional circle at $L=3$, and subcycling won in each. Global stepping never won, so the data cannot reveal a false positive. In the other 11 configurations subcycling also won, but the test charges subcycling with all of its non-kernel time $O_L$, here 0.29 to 1.74 times its kernel time, while the overhead that decides the comparison is negative. Weighting $\Sid$ by the per-level kernel costs of the same run gave the same verdicts. The subcycled kernel cost up to 25 percent more per update, so the test could have erred toward subcycling; it did not, because the time outside the kernels was large under both schedules. In this code the test settled the points and was too conservative for most other features. Where it is silent, a short paired run with subcycling off decides directly through $S$.

\subsection{Level counts in AMReX}\label{sec:counts-amrex}

Lemma~\ref{lem:growth} concerns deep hierarchies, and the AMReX hierarchies are shallow and blocked. Their mean growth factors lie at or below $r^\delta$ for every feature and below it for every feature with $\delta>0$. In three dimensions those with $\delta>0$ also lie below the growth factors of the synthetic hierarchies of Table~\ref{tab:counts}, which tag without blocking, and the difference is padding: the blocking factor, the buffer cell and the clustering of tagged cells into boxes add cells around the feature on each level, and at these depths that padding is wide compared with the band itself (Appendix~\ref{app:protocol}). Section~\ref{sec:growth} absorbs such padding into the constants of the lemma, so it does not change the limit, but it gives the coarse refined levels more cells than the scaling implies and raises the budgets. Figure~\ref{fig:dimension} shows the AMReX budgets above the deep-hierarchy curve for the surfaces and filled regions and close to it for the curves. That closeness reflects offsetting effects rather than the limit, and where level 1 fills the domain the label $\delta$ is nominal. Depths of two to four levels are too shallow to measure the exponent itself.

\subsection{Scope}\label{sec:scope}

Before any timing the analysis plan limited the test to one code, one explicit kernel, one core and a fixed hierarchy in the primary set. It does not test time on parallel machines, elliptic synchronization, accelerator launch costs or accuracy at matched error, which Section~\ref{sec:limits} places outside the work model. On a shared machine with two cores the absolute times say nothing about production hardware. No configuration came near break-even: the realized saving $1-1/S$ was at least 0.15 and the margin $\beta-x_C$ above 0.20. The test therefore concerns the prediction of the speedup, not the decision at break-even. In the test itself, the selected pair won by a median leave-one-out error of 0.124 against 0.125, and reruns in a later session moved two held-out speedups by 4 and 11 percent (Appendix~\ref{app:protocol}). The runs show that the overhead as Section~\ref{sec:two} defines it can favor subcycling in a real code, but not that it will do so in a parallel production run. Six exploratory configurations run after the predictions were frozen (Appendix~\ref{app:protocol}) play no part in the verdict, and subcycling was faster in all six.

\section{Conclusions}\label{sec:conclusions}

Subcycling pays exactly when its extra cost of coordinating the levels, as a fraction of the work of global stepping, stays below $\beta=1-1/\Sid$. The geometry of the refined set, together with which cells the code advances, largely sets $\beta$: when every cell is advanced and the counts grow geometrically, point-like refinement can repay a substantial overhead, while refinement around a surface in three dimensions can save about one seventh of the work in a deep hierarchy at $r=2$ and one twenty-first at $r=4$. In one AMReX code on one core a cost model fixed in advance predicted the speedups for curves and surfaces with a median error of 7 percent, a post hoc check found that the update counts alone did as well except for points, and no configuration came near break-even.

These statements concern work. Wall-clock time, accuracy and the cost of elliptic synchronization can shift the balance in either direction. AMReX and Nyx already choose the subcycling pattern at run time by minimizing the estimated update work per unit of simulated time~\cite{almgren2013nyx,amrex2026source}. SeisSol does the same for its clusters, with optional launch and fill costs~\cite{seissol2026local}. Adding a measured overhead to such a search turns the budget into a decision a code can make for itself.

Of the three questions that remain open, the first is the schedule $q$ that minimizes $\WL(q)+H(q)$ when $H$ and a latency charge per phase are measured. Its work-only version is already solved by enumeration in AMReX, so the open part is the overhead. The second is the behavior of $S_L$ for features aligned with the grid and for lattice self-similar fractals, where the counts oscillate and only the band of Corollary~\ref{cor:near} holds. The third is the comparison of the two schedules at matched error rather than at equal final time. A test near break-even, with refinement ratio 4 around surfaces and filled regions and a global step that computes its time step once per coarse step, or on many processors, would show whether a measured overhead decides the comparison where it matters.

What a developer gains over timing both schedules once is a sequence that starts before any run. The budgets of Table~\ref{tab:limits}, summarized in the box of Section~\ref{sec:intro}, say, from the dimension of the refined feature, the refinement ratio and which cells the code advances, whether subcycling is worth trying. When an update costs the same under both schedules, one profiled subcycled run can show that subcycling pays; in the AMReX runs it did so for every point and was silent for most other features. When it is silent, a short paired run with subcycling off decides directly, and the margin $\beta-x_C$ then says how much more overhead the code can absorb before the verdict reverses. The first step rests on one fact: in a deep hierarchy the update work that subcycling avoids is set mainly by the dimension of what is refined, not of the space.

\section*{Reproducibility}

The script \texttt{hyman2026local\_reproduce.py} (Python~3.13 with NumPy~2.5 and Matplotlib~3.11) regenerates Tables~\ref{tab:fixed}--\ref{tab:counts} and Figures~\ref{fig:dimension} and~\ref{fig:budget}. It checks Propositions~\ref{prop:break} and~\ref{prop:geom}, the identity~\eqref{eq:multibreak}, Corollary~\ref{cor:near}(i), Remark~\ref{rem:norm}, the single-run test and the capped time ratio in exact rational arithmetic over a grid of parameters. It also builds the synthetic hierarchies of Section~\ref{sec:counts}, including the aligned circle of Remark~\ref{rem:aligned}, and computes the overhead fractions of Section~\ref{sec:published} from the published speedup pairs, using no random numbers and running in seconds. The AMReX points of Figure~\ref{fig:dimension} are copied from the evaluation of Section~\ref{sec:amrex}, and the script checks them against it. The folder \texttt{validation} holds that evaluation: the analysis plan and its amendment, the deviation log, the patch with the pinned AMReX commits and build notes, the run script, the raw timings, the fitting and evaluation scripts (which also need SciPy~1.18), the frozen predictions, the generator of Figure~\ref{fig:validation} and the scripts \texttt{s6\_tables.py} and \texttt{s8\_appendix\_table.py}, which produce Tables~\ref{tab:amrex} and~\ref{tab:counterfactual}. The repository is at \url{https://github.com/machyman/hyman2026local}.

\section*{Use of AI tools}

The research question and the dimensional hypothesis are the author's, and an earlier draft developed the two-level and multilevel work models with the assistance of ChatGPT (OpenAI). Claude (Anthropic) checked the derivations, located and verified the prior work, extended the analysis to feature-following hierarchies, wrote the counting and plotting scripts and revised the text. For Section~\ref{sec:amrex}, Claude also patched and built the AMReX harness, wrote the analysis plan, ran the paired timings, fitted and evaluated the cost models and drafted the section, which three independent Claude referee passes and a citation check then reviewed. An independent Claude referee panel proposed the bounded-ratio corollary, the single-run test, the normalization remark and the two-sided latency count, which Claude then verified and wrote into the text. The author checked the derivations and computations and takes full responsibility for the content of this paper.

\appendix
\renewcommand{\thetable}{A\arabic{table}}
\setcounter{table}{0}
\renewcommand{\theHtable}{A\arabic{table}}

\section{AMReX test protocol}\label{app:protocol}

The analysis plan was written and its SHA-256 hash (beginning \texttt{e4c615e3}) recorded before any timed run. One amendment (hash beginning \texttt{cbc2f22c}) was written before any primary timing. It set the length of the timing window for each configuration so that every subcycled window lasts about two seconds or more. It followed one subcycled calibration run of each of the 17 configurations, held-out ones included, with no held-out speedup computed. It also followed smoke runs on training configurations, in which the kernel time per update differed between the schedules by about 15 percent on some levels. Neither changed the models or the split. The plan, its amendment, the deviation log and the data are in the \texttt{validation} folder.

All 190 runs completed (170 in the primary set and 20 reruns) with timers that accounted for 95.9 to 99.9 percent of each window. The two schedules built identical hierarchies, and the conserved sum drifted by at most $1.3\times10^{-13}$ relative to its value. Four cells whose repetitions varied by 5 percent or more (population standard deviation) were rerun once as the plan required, and the reruns replace the originals. The reruns ran in a later session and moved $S$ for the two-dimensional circle by 4 percent at $L=2$ and 11 percent at $L=3$. The global schedule of the two-dimensional circle at $L=3$ still varied by 7.3 percent after its rerun and is kept with a flag.

Non-negative least squares set 5 of the selected pair's 12 overhead coefficients to zero, so the fitted overhead of global stepping is a cost per cell in both dimensions. In three dimensions each schedule's three overhead coefficients rest on four training configurations, and on three in each leave-one-out fold. With 14 coefficients the pair had a median leave-one-out error of 0.124 on the training set, against 0.125 for the next pair. Its predictions for the held-out configurations were written to a file whose hash (beginning \texttt{11d2e030}) was recorded before any held-out speedup was computed. The bootstrap interval of Section~\ref{sec:prediction} is a percentile interval of the held-out median from 2,000 resamples of the repetitions, each followed by a refit of the selected pair. An independent recomputation from the raw timings reproduced the hash, the coefficients and the errors to a relative difference of $2\times10^{-14}$.

After the predictions were frozen we ran six exploratory configurations, which play no part in the verdict. They are the two-dimensional point and circle with $r=4$ at $L=2$, the circle at $L=3$ with regridding on in two and three dimensions, and deeper points at $L=6$ in two dimensions and $L=4$ in three. With its training coefficients unchanged, the selected model predicted their speedups with errors from 0.001 to 0.136, the largest for the point with $r=4$. With regridding on, the predictions used the counts of the initial hierarchy rather than those of later ones. Subcycling again spent less time outside its kernels, and it was faster in all six. Four of these cells varied by 5 percent or more and were not rerun, and the timers of the deep two-dimensional point accounted for as little as 94.8 percent of the window.

The level counts of the AMReX hierarchies are in the \texttt{validation} folder. Their mean growth factors are 1.58 to 1.68 for the circle and 3.55 to 3.69 for the disk in two dimensions, and 1.12 to 1.32 for the circle, 2.69 to 2.86 for the sphere and 2.81 to 3.86 for the ball in three. The synthetic hierarchies of Table~\ref{tab:counts} tag cell by cell without blocking and grow by 2.00, 3.95 and 6.10 for the circle, the sphere and the ball between $64^3$ and $128^3$ cells across the domain, against 1.60, 3.19 and 3.38 in AMReX at $L=3$. Level 1 covers 24 to 47 percent of the domain for the circle and the disk in two dimensions and 38 to 100 percent for the circle, the sphere and the ball in three, where it fills the domain for the sphere and the ball at $L=3$.

Table~\ref{tab:counterfactual} gives two counterfactual overheads for each configuration, computed after the data from the same timers. Each adds to $x_C$ the difference between the schedules' median times in the omitted timers, divided by $K_G$, and leaves every other time unchanged. The rows are generated by \texttt{validation/s8\_appendix\_table.py}.

\begin{table}[ht]
\centering
\small
\caption{Counterfactual overheads for the 17 AMReX configurations, post hoc. $x_C$ is the overhead of Table~\ref{tab:amrex}; $x_C'$ removes the difference between the schedules in the time-step computation, and $x_C''$ removes the difference in the stability check as well. Subcycling would win where the value lies below $\beta$, which holds in every row.}
\label{tab:counterfactual}
\begin{tabular}{llcccccc}
\toprule
$d$ & Feature & $\delta$ & $L$ & $\beta$ & $x_C$ & $x_C'$ & $x_C''$\\
\midrule
2 & point & 0 & 2 & 0.730 & $-0.060$ & $0.075$ & $0.035$\\
2 & point & 0 & 3 & 0.844 & $-0.275$ & $-0.115$ & $-0.138$\\
2 & point & 0 & 4 & 0.903 & $-0.422$ & $-0.250$ & $-0.265$\\
2 & circle & 1 & 2 & 0.351 & $-0.229$ & $-0.088$ & $-0.179$\\
2 & circle & 1 & 3 & 0.387 & $-0.420$ & $-0.226$ & $-0.305$\\
2 & circle & 1 & 4 & 0.397 & $-0.392$ & $-0.169$ & $-0.257$\\
2 & disk & 2 & 2 & 0.179 & $-0.192$ & $-0.024$ & $-0.139$\\
2 & disk & 2 & 3 & 0.165 & $-0.362$ & $-0.157$ & $-0.278$\\
2 & disk & 2 & 4 & 0.153 & $-0.298$ & $-0.070$ & $-0.194$\\
\addlinespace
3 & point & 0 & 2 & 0.650 & $-0.003$ & $0.063$ & $0.041$\\
3 & point & 0 & 3 & 0.689 & $-0.159$ & $-0.067$ & $-0.086$\\
3 & circle & 1 & 2 & 0.304 & $-0.016$ & $0.083$ & $0.028$\\
3 & circle & 1 & 3 & 0.387 & $-0.132$ & $-0.015$ & $-0.065$\\
3 & sphere & 2 & 2 & 0.161 & $-0.068$ & $0.031$ & $-0.034$\\
3 & sphere & 2 & 3 & 0.196 & $-0.065$ & $0.051$ & $-0.012$\\
3 & ball & 3 & 2 & 0.151 & $-0.100$ & $-0.007$ & $-0.072$\\
3 & ball & 3 & 3 & 0.131 & $-0.078$ & $0.045$ & $-0.025$\\
\bottomrule
\end{tabular}
\end{table}

\clearpage
\bibliographystyle{unsrtnat}
\bibliography{hyman2026local}

\end{document}